\newtheorem{theorem}{Theorem}[section]
\newtheorem{proposition}[theorem]{Proposition}
\newtheorem{lemma}[theorem]{Lemma}
\newtheorem{cor}[theorem]{Corollary}
\newtheorem{conjecture}[theorem]{Conjecture}
\theoremstyle{definition}
\newtheorem{defn}[theorem]{Definition}
\newtheorem{claim}[theorem]{Claim}
\newtheorem{question}[theorem]{Question}
\newtheorem{construction}[theorem]{Construction}
\theoremstyle{remark}
\newtheorem{example}[theorem]{Example}
\begin{document}

\title{Graphs with many hamiltonian paths}
\author[E. Carlson]{Erik Carlson}
\author[W. Fletcher]{Willem Fletcher}
\author[M.K. Montee]{MurphyKate Montee}
\author[C. Nguyen]{Chi Nguyen}
\author[J. Renders]{Jarne Renders}
\author[X. Zhang]{Xingyi Zhang}

\maketitle

\begin{abstract}

A graph is \emph{hamiltonian-connected} if every pair of vertices can be connected by a hamiltonian path, and it is \emph{hamiltonian} if it contains a hamiltonian cycle. We construct families of non-hamiltonian graphs for which the ratio of pairs of vertices connected by hamiltonian paths to all pairs of vertices approaches 1. We then consider minimal graphs that are hamiltonian-connected. It is known that any order-$n$ graph that is  hamiltonian-connected must have $\geq 3n/2$ edges. We construct an infinite family of graphs realizing this minimum. 

\end{abstract}

\section{Introduction}
A hamiltonian path in a graph is a path which visits every vertex exactly once. A graph is \emph{hamiltonian} if it admits a hamiltonian cycle, and \emph{homogeneously traceable} if every vertex of $G$ is the starting vertex of a hamiltonian path. If every pair of vertices in $G$ is connected with a hamiltonian path, then $G$ is \emph{hamiltonian-connected}. This class of graphs was introduced by Ore \cite{Ore} in 1963, and has been well studied since then. Every hamiltonian-connected graph must be hamiltonian (as long as it has at least 3 vertices), and every hamiltonian graph must be homogeneously traceable. 

It is natural to explore the reverse implications; more generally, one might ask `how many hamiltonian paths can a non-hamiltonian graph contain?' Every cycle graph is hamiltonian but not hamiltonian-connected, and the Petersen graph is (perhaps unsurprisingly) an example of a homogeneously traceable non-hamiltonian (HTNH) graph. In fact, Chartrand, Gould, and Kapoor show in \cite{Chartrand} that there exist HTNH graphs of any order $n\geq 9$. Hu and Zhan extended this in \cite{HU21} to find families of regular HTNH graphs. In all of these papers, the authors construct graphs by starting with some base graph and building new graphs (with larger vertex sets) inductively. 

In this paper, we consider graphs with `many' hamiltonian paths, in the sense that the number of pairs of vertices connected by a hamiltonian path is large. We say that a graph $G$ is \emph{$k$-pair-strung} if $k$ pairs of vertices can be connected by a hamiltonian path. Thus a hamiltonian-connected graph of order $n$ is $\binom{n}{2}$-pair-strung, and a homogeneously traceable graph is (at least) $(n/2)$-pair-strung.

In Section \ref{sec: highly pair strung graphs} we consider non-hamiltonian graphs which are nevertheless highly pair-strung; in particular, we let $r_G$ denote the fraction of pairs of vertices of $G$ which are connected by a hamiltonian path, so that in a graph $G$ with order $n$, 
    \[
        r_G = \max\left.\left\{\frac{k}{\binom{n}{2}} \;\right|\; G\mbox{ is } k\mbox{-pair-strung}\right\}.
    \]
We first show that there exists a family of non-hamiltonian graphs $\mathcal{G}$ for which $\sup_{G \in \mathcal{G}}\{r_G\} = 1$. We then investigate possible values of $r_G$; we describe an explicit construction that produces families of graphs for which $r_G$ approaches $\frac{k-1}{k}$. This construction depends on finding a collection of edges in a non-hamiltonian graph for which every edge is connected by a hamiltonian path containing the other edges. We call such edge sets \textit{H-path connected} (see Definition \ref{def:Hpathconnected}, Theorem \ref{thm: highly pair strung family}). 

In Section \ref{sec: minimal hc graphs} we consider another extreme; rather than limiting ourselves to non-hamiltonian graphs, we construct an infinite family of hamiltonian graphs containing a minimal edge set (see Theorem \ref{thm: minimal hamiltonian-connected}).


\subsection{Acknowledgements}
The authors would like to thank Xingzhi Zhan for alerting them to \cite{Chartrand, Skupien}; a previous version of this manuscript reproduced several of their results.  

\section{Nearly Hamiltonian-Connected Non-Hamiltonian Graphs}\label{sec: highly pair strung graphs}
In this section, we consider graphs that have many pairs of vertices connected by a hamiltonian path. We will quantify this in the following definition.

\begin{defn} \label{defn: n pair strung}
A graph $G$ is $k$-\textit{pair-strung} if $k$ pairs of the vertices in $G$ have a hamiltonian path between them.
\end{defn}

Note that the existence of HTNH graphs implies that there exist non-hamiltonian graphs which are at least $(n/2)$-pair-strung. Unsurprisingly, we can also find non-hamiltonian graphs with fewer hamiltonian paths.

\begin{example}
Let $n>m>0$. Consider the graph $G$ obtained by attaching a path with $n-m+1$ vertices, $P_{n-m+1},$ to a complete graph on $m$ vertices so that $K_m \cap P_{n-m+1} = \{v\}$ is one of the endpoints of $P_{n-m+1}$. (This is illustrated in Figure \ref{fig: spectrum}). Let $w$ be the other endpoint of $P_{n-m+1}$. Then every hamiltonian path in $G$ begins (or ends) at $w$, and ends (or begins) at one of the vertices in $V(K_m) \backslash \{v\}.$ In particular, exactly $m$ vertices of $G$ are an endpoint of a hamiltonian path, and exactly $m$ pairs of vertices of $G$ are are connected by a hamiltonian path. Furthermore, $G$ is non-hamiltonian since no cycle can contain $w$.
\end{example}

\begin{figure}
	\begin{tikzpicture}
    \draw (0,2) ellipse (1cm and 2cm);
    \draw[fill=black] (0.25,1) circle (3pt);
    \draw[fill=black] (2.25,1) circle (3pt);
    \draw[fill=black] (3.25,1) circle (3pt);
    \draw[fill=black] (5.25,1) circle (3pt);
    \path (3.6,1) -- node[auto=false]{\ldots} (4.9,1);
    \node at (0.25,1.5) {$v$};
    \node at (3.75,.5) {$P_{n-m+1}$};
    \node at (5.25,1.5) {$w$};
    \node at (-1, 4) {$K_m$};
    \draw[thick] (0.25,1) -- (3.55,1);
    \draw[thick] (4.95,1) -- (5.25,1);
	\end{tikzpicture}
	\caption{The graph illustrated here has $n$ vertices. Every vertex in $K_m$ except $v$ is an endpoint of a hamiltonian path, and all hamiltonian paths begin (or end) at $w$. }
	\label{fig: spectrum} 
\end{figure}

For a graph, $G$, we define the pair connected ratio, $r_G$, as the fraction of pairs of vertices that have a hamiltonian path between them.  More precisely, we have the following definition:

\begin{defn} \label{defn: R}
Let $G$ be a graph of order $n$. Let the \emph{pair connected ratio} of $G$ be defined as
    \[
    r_G = \max\left.\left\{\frac{k}{\binom{n}{2}} \;\right|\; G\mbox{ is } k\mbox{-pair-strung}\right\}.
    \]

For any $n$, let $R_n = \sup \{r_G | G \mbox{ is a non-hamiltonian graph of order }n\}$, and let $R = \sup \{R_n | n\geq 3\}$. 
\end{defn}

We will first establish an upper bound on $R_n$. 

\begin{theorem} \label{thm: upper-bound}
For $n > 3$, $R_n \leq \frac{n-2}{n}$.
\end{theorem}

\begin{proof}
Consider a non-hamiltonian graph, $G$, with at least one hamiltonian path $\gamma$ and $n>3$ vertices. Label the vertices $v_1, v_2, \dots, v_n$ in order along $\gamma$. If $\gamma$ is the only hamiltonian path in $G$ then 
$r_G = \frac{2}{n(n-1)}<\frac{n-2}{n}$ when $n>3$.
Suppose instead that $G$ has at least one other hamiltonian path, $\gamma' = v_{i_1}, v_{i_2}, \dots, v_{i_n}$. If $i_1 = k$ and $i_n = k\pm 1$, then $\gamma$ is a hamiltonian cycle. Thus any hamiltonian path in $G$ can not start and end on vertices with labels differing by one. The total number of pairs of vertices which are non-adjacent on $\gamma$ is at most $\frac{(n-1)(n-2)}{2}$. Dividing by the number of pairs of vertices, $\binom{n}{2}$, we get $r_G \leq \frac{n-2}{n}$. Thus $R_n \leq \frac{n-2}{n}.$
\end{proof}

It is clear from the definition that $R\leq 1$; in fact, we can show that $R = 1$ by considering \emph{maximally non-hamiltonian} (MNH) graphs; that is, a graph $G$ where $G$ is non-hamiltonian but $G + e$ is hamiltonian for every edge $e$ in the complement of $G$. 

\begin{theorem}\label{thm: R=1}
    R = 1.
\end{theorem}
    \begin{proof}
        Note that if $G$ is MNH, then every pair of non-adjacent vertices in $G$ must be connected by a hamiltonian path. Therefore an MNH graph $G$ with $n$ vertices and $m$ edges must be $\left(\binom{n}{2} - m\right)$-pair-strung. There exist MNH graphs of order $n\geq 19$ with $\lceil 3n/2\rceil$ edges (see \cite{ClarkEntShapiro, ClarkEnt, Linetal}). Let $\{G_n\}$ be a family of such graphs. Then we have:
         \[
            \lim_{n\to \infty} r_{G_n} = \lim_{n\to \infty} \frac{n(n-1)-3n}{n(n-1)} = 1.
        \]
    \end{proof}

The rest of this section offers an alternative proof of Theorem \ref{thm: R=1}, and is motivated by the following question:

\begin{question}
What are the possible values of $r_G$?
\end{question}

We conjecture that for every rational number $q \in[0,1)$ there is some graph $G$ so that $r_G = q.$ 

\begin{conjecture}
For every $n$ there exists a number $p(n)$ so that for every $k\leq p(n)$ there exists a $k$-pair-strung graph of order $n$ which is not $(k+1)$-pair-strung, and there is no $k$-pair-strung non-hamiltonian graph of order $n$ for any $k>p(n)$. 
\end{conjecture}

Using this language, we can ask further questions.

\begin{question}
How does $p(n)$ grow as a function of $n$? 
\end{question}

For small values of $n$ we have the data shown in Figure \ref{fig: conjecture data}, which were obtained by computer search.

\begin{figure}
    \centering
        \begin{tabular}{c|cccccccc}
            $n$ & 4&5&6&7&8&9&10&11\\
            \hline
            $p(n)$ &2&4&6&9&13&18&25&34
        \end{tabular}
    \caption{Value of $p(n)$ for small numbers $n$.}
    \label{fig: conjecture data}
\end{figure}

For the rest of this section we describe a construction that produces a family of graphs with $r_G \to \frac{n-1}{n}$ for any natural number $n$.

\begin{defn} \label{def:Hpathconnected} 
A set of edges $S = \{e_1, e_2, \dots, e_n\}$ in a graph $G$ is \emph{H-path connected} if for every $i \neq j$, there is a hamiltonian path with its endpoints in $e_i$ and $e_j$ and which contains every edge in $S - \{e_i, e_j\}.$
\end{defn}

Finding sets of H-path connected edges is, in general, computationally difficult. One example of such a set is any collection of five pairwise disjoint edges in the Petersen graph. A larger example is demonstrated below.

\begin{example}
Consider the graph $G$ illustrated in Figure \ref{fig: hpathex}. This graph is the smallest known \emph{almost hypohamiltonian} graph (see \cite{Zamfi}). That is, it is non-hamiltonian and for all but one vertex $v$, the graph $G-v$ is hamiltonian. (In this case, the exceptional vertex is the central vertex.) The set of seven red edges indicated in Figure \ref{fig: hpathex} is an H-path connected set of edges. The hamiltonian paths realizing this are indicated in green.
\end{example}

\begin{figure}
    \centering
    \includegraphics[width=.8\textwidth]{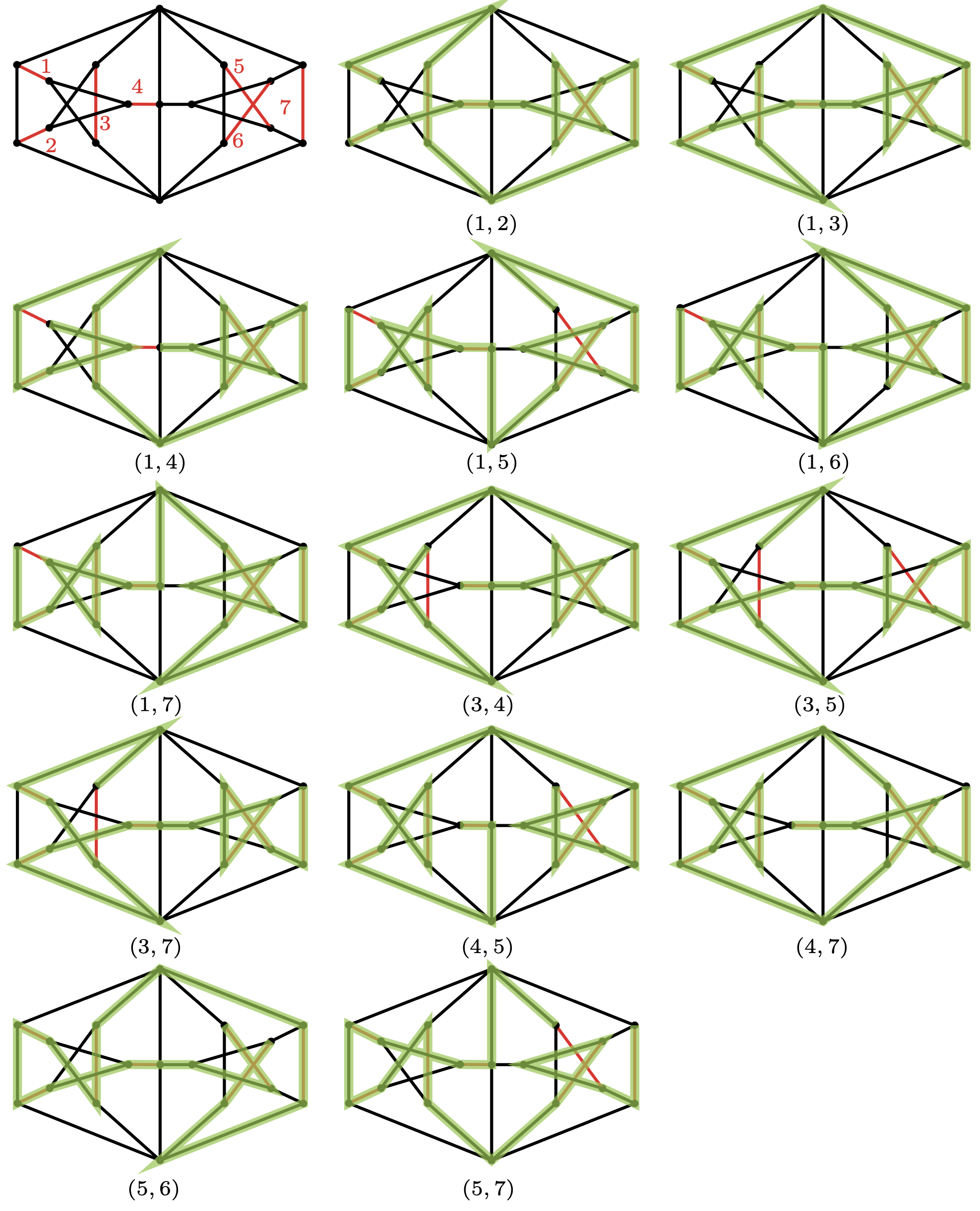}
    \caption{A graph with an H-path connected edge set of size seven (indicated in red). The hamiltonian paths connecting each pair of edges are shown in green. Note that some of the pairs of edges are not illustrated because of the reflective symmetry of the graph.}
    \label{fig: hpathex}
\end{figure}

We use a non-hamiltonian graph containing an H-path connected set of edges to construct a family of non-hamiltonian graphs which are highly pair-strung as follows. 

\begin{construction}\label{construction of Gk}
Let $G$ be a non-hamiltonian graph and let $k \in \mathbb{N}$. Let $S = \{e_1, \dots, e_n\}$ be a set of H-path connected edges. Label the endpoints of each edge $e_i$ with $v_i, w_i$. We construct $G_k$ as follows: To each edge $e_i$, attach a complete graph on $k$ vertices, labeled $K^i$, so that every vertex of $K^i$ is adjacent to $v_i$ and $w_i$.
\end{construction}

We first prove some properties of the graphs $G_k$.

\begin{lemma}\label{lem: Gk not hamiltonian}
Let $G$ be a non-hamiltonian with a set of edges $S$ which is H-path connected.  For each $k \in \mathbb{N}$ the graph $G_k$ as in Construction \ref{construction of Gk} is non-hamiltonian.
\end{lemma}

\begin{proof} Assume (toward contradiction) that $G_k$ is hamiltonian. Then there exists a hamiltonian cycle $\lambda'$ in $G_k$. Up to cyclic permutations, we may assume that the first vertex of $\lambda'$ is $v_1$. By construction, we know that vertices in each $K^i$ are only adjacent to $v_i$ and $w_i$ in $G$. Therefore, between any pair of vertices $v_i$ and $w_i$, $\lambda'$ must pass through all the vertices of $K^i$. So up to reversing the orientation of $\lambda'$ we may assume that the next vertex of $\lambda'$ is in $K^1$, and the first vertex of $\lambda'$ after leaving $K^1$ is $w_1$.

Consider the path $\lambda$ in $G$ obtained by replacing every subpath of $\lambda'$ connecting $v_i$ to $w_i$ with the edge $e_i$. This is a hamiltonian path in $G$, and since the last vertex of $\lambda'$ was adjacent to $v_1$ and not in $K^1$, it is a hamiltonian cycle in $G$. But $G$ is not hamiltonian, so this is a contradiction.
\end{proof}

\begin{lemma}\label{lem: Gk has hamiltonian paths}
Let $G$ be non-hamiltonian, $S$ a set of H-path connected edges, and $G_k$ constructed as in Construction \ref{construction of Gk}. There is a hamiltonian path in $G_k$ from each vertex in $K^i$ to each vertex in $K^j$, for any $i \neq j$.
\end{lemma}

\begin{proof} Since $e_i, e_j \in S$ and $S$ is H-path connected, we know there exists a hamiltonian path in $G$ starting from an endpoint of $e_i$ and ending at an endpoint of $e_j$ which contains all the other edges in $S$. Call this hamiltonian path $\rho$. Then there exists a hamiltonian path $\rho'$ in $G_k$ obtained by replacing each edge $e_\ell$ with a hamiltonian path in $\{v_\ell, w_\ell\}\cup K^\ell$ (and possibly appending paths in $K^i\cup \{w_i\}$ and $K^j \cup \{v_j\}$ as needed).
\end{proof}

\begin{theorem}\label{thm: highly pair strung family}
If $G$ is a non-hamiltonian graph containing a set of $n>1$ edges which is H-path connected, then  $\displaystyle r_{G_k} \to \frac{n-1}{n}$.
\end{theorem}

\begin{proof}
Let $G$ be a non-hamiltonian graph of order $m$ containing an H-path connected set of $n$ edges. By Lemma \ref{lem: Gk not hamiltonian} the graphs $G_k$ are all non-hamiltonian, and by Lemma \ref{lem: Gk has hamiltonian paths} $G_k$ contains at least $\binom{n}{2}k^2 $ hamiltonian paths.

Thus we can bound $r_{G_k}$ from below as follows:
\begin{align*}
    \displaystyle r_{G_k} &\geq \frac{\displaystyle\binom{n}{2} k^2}{\displaystyle\binom{m+n k}{2}} \\
    \displaystyle &= \frac{n(n-1)k^2}{(m+nk)(m-1+nk)}\\
    \displaystyle &= \frac{n(n-1)k^2}{n^2 k^2 +2mnk - nk+m^2-m}.
\end{align*}
As $k$ approaches infinity, this approaches $\frac{n-1}{n}$.

On the other hand, we can also bound $r_{G_k}$ from above. Notice that there is no hamiltonian path connecting two vertices in the same complete subgraph $K^i$; indeed, if there were such a path then $G_k$ would contain a hamiltonian cycle. So
    \begin{align*}
     r_{G_k} &\leq \frac{\displaystyle\binom{m+nk}{2} -\binom{k}{2}n}{\displaystyle\binom{m+n k}{2}} \\
     &= \frac{(m+nk)(m-1+nk) - k(k-1)n}{(m+nk)(m-1+nk)}\\
     &= \frac{(n^2-n) k^2 +(2mn)k +m^2-m}{n^2 k^2 +2mnk - nk+m^2-m}.
    \end{align*}
As $k$ approaches infinity, this approaches $\frac{n-1}{n}$. So 
    \[
        \lim_{k\to\infty} r_{G_k} = \frac{n-1}{n},
    \]
as desired.
\end{proof}

Note that the expression $\frac{n(n-1)k^2}{n^2 k^2 +2mnk - nk+m^2-m}$ is increasing in $k$, so the asymptotic bound on $r_{G_k}$ may  not be achieved in a finite graph. However, if we are more careful in counting the number of pairs of vertices connected by hamiltonian paths, we may find a maximum at a finite value of $k$. We can see this explicitly by taking $G$ to be the Petersen graph.

\begin{proposition}
Let $P$ denote the Petersen graph, and let $\{P_k\}_{k \in \mathbb{N}}$ denote the family of graphs constructed as in Construction \ref{construction of Gk}. We have $r_{P_k} = \frac{10k^2 + 40k + 20}{\binom{10+5k}{2}}.$ In particular, $r_{P_k}$ is maximized at $k = 18$, where $r_{P_{18}}= .8\overline{04}.$
\end{proposition}

\begin{figure}[h]
\centering
	\begin{tikzpicture}

    \node at (7.75,0) {$v_1$};
    \node at (6.7,3.55) {$v_2$};
    \node at (10.5,5.5) {$v_3$};
    \node at (13.3,3.55) {$v_4$};
    \node at (12.25,0) {$v_5$};
    \node at (8.6,1.2) {$w_1$};
    \node at (11.5,3.5) {$w_4$};
    \node at (8.5,3.5) {$w_2$};
    \node at (11.4,1.2) {$w_5$};
    \node at (10.5,4) {$w_3$};
    
    \node[red] at (9, .5) {$e_1$};
    \node[red] at (7.75, 2.75) {$e_2$};
    \node[red] at (9.75, 4.5) {$e_3$};
    \node[red] at (12.5, 2.8) {$e_4$};
    \node[red] at (11, .5) {$e_5$};
    
    \draw[thick] (11.75,0) -- (8.25,0) -- (7,3.25) -- (10,5.5) -- (13,3.25) -- (11.75,0) -- (11,1) -- (10,4) -- (9,1) -- (11.5,3) -- (8.5,3) -- (11,1);
    \draw[thick] (8.25,0) -- (9,1);
    \draw[thick] (7,3.25) -- (8.5,3);
    \draw[thick] (10,5.5) -- (10,4);
    \draw[thick] (13,3.25) -- (11.5,3);
    \draw[red, very thick] (8.25,0) -- (9,1);
    \draw[red, very thick] (7,3.25) -- (8.5,3);
    \draw[red, very thick] (10,5.5) -- (10,4);
    \draw[red, very thick] (13, 3.25) -- (11.5, 3);
    \draw[red, very thick] (11.75,0) -- (11, 1);
    \draw[fill=black] (8.25,0) circle (3pt);
    \draw[fill=black] (7,3.25) circle (3pt);
    \draw[fill=black] (10,5.5) circle (3pt);
    \draw[fill=black] (11.5,3) circle (3pt);
    \draw[fill=black] (11.75,0) circle (3pt);
    \draw[fill=black] (9,1) circle (3pt);
    \draw[fill=black] (11,1) circle (3pt);
    \draw[fill=black] (8.5,3) circle (3pt);
    \draw[fill=black] (13,3.25) circle (3pt);
    \draw[fill=black] (10,4) circle (3pt);

\end{tikzpicture}
	\caption{The Petersen graph, with an H-path connected set of edges highlighted in red.}
	\label{fig: 45Petersen} 
\end{figure}

    \begin{proof}
        Consider the Petersen graph, $P$. Pick a perfect matching in $P$ with edges $\{e_1, e_2, \dots, e_5\}$. One can verify that this is an H-path connected set of edges. Label one vertex of $e_i$ by $v_i$ and the other vertex by $w_i$, as in Figure \ref{fig: 45Petersen}.  By Lemma \ref{lem: Gk not hamiltonian} $P_k$ is non-hamiltonian, and by Lemma \ref{lem: Gk has hamiltonian paths} there is a hamiltonian path connecting any pair of vertices in distinct subgraphs $K^i, K^j$.

    \begin{claim}\label{claim: peterson Ki to P}
        Let $x \in V(K^i)$ and $y\in V(P) - \{v_i, w_i\}$. There is a hamiltonian path in $P_k$ connecting $x$ to $y$.
    \end{claim}

    \begin{proof}
        We may assume that $x \in V(K^1)$. For each $i \in \{2, 3, 4, 5\}$ let $\gamma_i$ be a hamiltonian path in $K^i$. Let $\gamma_1$ be a path in $K^1$ containing every vertex except $x$. There are two cases: Either the shortest path from $y$ to a vertex in $K^1$ has edge length $2$, or it has edge length $3$.

        Suppose that the shortest path from $y$ to a vertex in $K^1$ has length $3$. We can assume without loss of generality that $y = w_2$. The following path is hamiltonian: 
            \[
            \gamma = x, \gamma_1, v_1, w_1, w_4, \gamma_4, v_4, v_5, \gamma_5, w_5, w_3, \gamma_3, v_3, v_2, \gamma_2, y.
            \]

        If instead the shortest path from $y$ to a vertex in $K^1$ has length $2$, we can assume without loss of generality that $y=v_2$. The following path is hamiltonian: 
            \[
                \gamma = x, w_1, \gamma_1, v_1, v_5, \gamma_5, w_5, w_3, \gamma_3, v_3, v_4, \gamma_4, w_4, w_2, \gamma_2, y.
            \]
    \end{proof}

    \begin{claim}
        Let $x$ a vertex in $V(K^i) \amalg \{v_i, w_i\}$ and $y$ a vertex in $V(K^i)$. There is no hamiltonian path between $x$ and $y$ in $P_k$.
    \end{claim}

    \begin{proof}
        Suppose $x, y \in V(K^i)\amalg \{v_i, w_i\}$. Since $x, y$ are adjacent, any hamiltonian path from $x$ to $y$ would imply the existence of a hamiltonian cycle. By Lemma \ref{lem: Gk not hamiltonian} this is impossible.
    \end{proof}

    \begin{claim}
        Let $x, y \in V(P)$, $k>0$. There is a hamiltonian path in $P_k$ between $x$ and $y$ if and only if 
            \begin{enumerate}
                \item $x, y$ are not adjacent in $P$, and
                \item exactly one of $x, y$ lies in $\{w_1, w_2, w_3, w_4, w_5\}$.
            \end{enumerate}
    \end{claim}

    \begin{proof}
        Suppose that $x, y$ are adjacent in $P$. By Lemma \ref{lem: Gk not hamiltonian} there is no hamiltonian path from $x$ to $y$.
        
        Suppose instead that $x$ and $y$ are not adjacent in $P$. We may assume that $x = v_1$ and $y \in V(P) - \{w_1, v_2, v_5\}.$  Up to symmetry of $P$ there are two cases: either $y = w_2$ or $y = w_3.$ Let $\gamma_i$ denote a hamiltonian path in $K^i$. 

        If $y = w_2$, the path 
            \[
            \gamma = x, \gamma_1, w_1, w_4, \gamma_4, v_4, v_5, \gamma_5, w_5, w_3, \gamma_3, v_3, v_2, \gamma_2, x
            \]
        is hamiltonian in $P_k$.

        If $y = w_3$, the path
            \[
            \gamma = x, \gamma_1, w_1, w_4, \gamma_4, v_4, v_5, \gamma_5, w_5, w_2, \gamma_2, v_2, v_3, \gamma_3, y
            \]
        is hamiltonian.

        Suppose that there exists a hamiltonian path $\lambda'$ in $P_k$ from $x$ to $y$. Note that for any $j$, the minimal subpath of $\lambda'$ containing all the vertices of $K^j$ must have edge length exactly $k-1$. Let $\gamma_j$ denote these subpaths in $\lambda'$. Note that $\lambda'$ thus includes either $v_i, \gamma_i, w_i$ or $w_i, \gamma_i, v_i$ as a subpath for each $i \in [5]$. Since $5$ is odd and the first vertex of $\lambda'$ is $v_1$, the final vertex of $\lambda'$ must be $w_j$ for some $j \in [5]$. Thus there is no hamiltonian path in $P_k$ from $v_i$ to $v_j$ for any $i, j \in [5]$. Similarly, there is no hamiltonian path in $P_k$ from $w_i$ to $w_j$ for any $i, j \in [5]$.

    \end{proof}

        This covers all the possible pairs of vertices, so for any $k>0,$ $P_k$ is $10k^2 + 40k + 20$-pair-strung, and it is not $k'$-pair-strung for any $k'>10k^2 + 40k + 20$. Therefore 
            \[
                r_{P_k} = \frac{10k^2 + 40k + 20}{\binom{10+5k}{2}}.
            \]
    \end{proof}

We now describe a family of graphs which contain large H-path connected sets.

\begin{theorem}\label{thm: bipartite h paths}
    Let $G= K_{n, n+1}$ be the complete bipartite graph with bipartitioned vertices $\{a_1, \dots, a_n\}\cup \{b_1, \dots, b_{n+1}\}$. Then the set 
        \[
            S = \{(a_1, b_1), \dots, (a_n, b_n)\}
        \]
    is an H-path connected set in $G$.
\end{theorem}
    \begin{proof}
        Let $1\leq i<j\leq n$. The path
            \begin{multline*}
                    b_i, a_i, b_{i-1}, a_{i-1}, \dots, b_1, a_1, b_{n+1}, a_n, b_n,\\
                    a_{n-1}, b_{n-1}, \dots, a_{j+1}, b_{j+1}, a_{i+1}, b_{i+1}, \dots, a_{j}, b_{j}.
                \end{multline*}
        is hamiltonian and contains all edges in $S$, as needed (see Figure \ref{fig: bipartite h path} for an example).
    \end{proof}

Putting together Theorem \ref{thm: highly pair strung family} and Theorem \ref{thm: bipartite h paths}, we get the following result, which provides an alternative proof that $R=1.$

\begin{cor}
    For all $n>1$, there is a family of graphs $G_k$ so that $r_{G_k}\to \frac{n-1}{n}$ as $k\to \infty$. In particular, $R = 1$.
\end{cor}

\begin{figure}
    \centering
    \includegraphics[width=.45\textwidth]{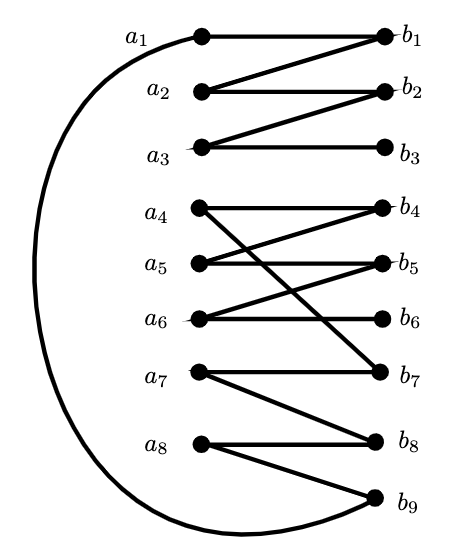}
    \caption{Hamiltonian path in $K_{8, 9}$ connecting edges $(a_3, b_3)$ and $(a_6, b_6)$. }
    \label{fig: bipartite h path}
\end{figure}

\section{A Minimal Hamiltonian-Connected Graph}\label{sec: minimal hc graphs}
In the previous section, we demonstrated the existence of graphs that are as close to being hamiltonian-connected as possible without being hamiltonian.  In this section, we explore an alternative extreme: we construct a family of hamiltonian-connected (and therefore hamiltonian) graphs that have a minimal number of edges and minimal number of minimum degree vertices. This partially answers a question of Modalleliyan and Omoomi (\cite{omoomi}), which asked whether there exist \emph{minimally hamiltonian-connected} graphs (i.e.\ hamiltonian-connected graphs such that the removal of any edge results in a non-hamiltonian-connected graph) with maximal vertex degree $\Delta$ so that $3\leq \Delta \leq \lceil n/2\rceil.$ While preparing this paper, Zhan \cite{Zhan} completely answered this question in the affirmative. In particular, he constructs a cubic graph which is minimally hamiltonian-connected. However, our construction is sufficiently distinct from Zhan's that we include it out of interest to the field. Our construction is highly symmetric and inductive.

We first note that a minimal hamiltonian-connected graph of order $n$ must have at least $\frac{3n}{2}$ edges. This was first proven by Moon \cite{moon}.

\begin{theorem}[\cite{moon}]
If $G$ is a hamiltonian-connected graph with $n>3$ vertices, then every vertex has degree at least $3$, and $G$ must have at least $\frac{3n}{2}$ edges.
\end{theorem}

\begin{proof}
Suppose that some vertex $v$ of $G$ has degree 2. Let $u, w$ be the two vertices adjacent to $v$. If $\gamma$ is a hamiltonian path in $G$ between $u$ and $w$, it contains $v$ and therefore contains $u, v, w$ as a subpath. Thus $\gamma = u,v,w$. But since $G$ has more than 3 vertices, $\gamma$ is not hamiltonian. Therefore every vertex of $G$ has degree at least 3, so the number of edges in $G$ is at least $\frac{3n}{2}.$
\end{proof}

Now we construct a family of graphs of even order $m=2n$ which have $3n$ edges. We will show that these graphs are hamiltonian-connected. Note that since every hamiltonian-connected graph is hamiltonian, such graphs must contain a hamiltonian cycle as a subgraph. So we begin our construction with the cycle on $2n$ vertices, $C_{2n}$, where $n\equiv 0\mod 3$.

Label the vertices of the cycle $C_{2n}$ by $0, 1, 2, \dots, 2n-1$ in order. Add edges $(0, n), (1, n+2), (2, n+1)$, and continue in this pattern in groups of three until every vertex has degree 3.

More precisely, for $n \equiv 0 \mod 3$, we define $\Gamma_n$ to be the graph with vertex set $V(\Gamma_n) = [2n]$ and edge set 
\[
E(\Gamma_n) = E(C_{2n})\cup \{ (3i, 3i+n), (3i+1, 3i+n+2), (3i+2, 3i+n+1)\}_{i=0}^{n-1}.\]
The graphs $\Gamma_3, \Gamma_6, \Gamma_9$ constructed in this way are illustrated in Figure \ref{fig: gamma369}.

\begin{figure}
    \centering
    \includegraphics[width=1\textwidth]{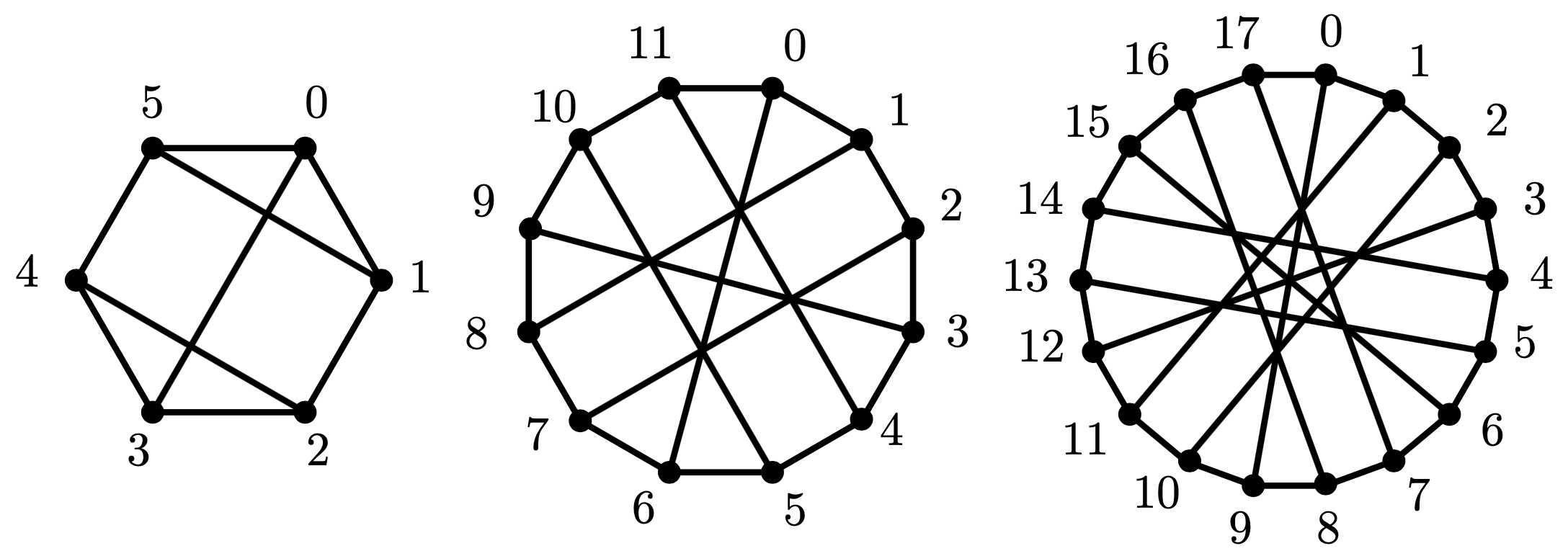}
    \caption{Shown here are $\Gamma_3, \Gamma_6$ and $\Gamma_9$.}
    \label{fig: gamma369}
\end{figure}

\begin{theorem}\label{thm: minimal hamiltonian-connected}
For any $n\equiv 0\mod 3$, $\Gamma_n$ is a hamiltonian-connected graph of order $2n$ with $3n$ edges.
\end{theorem}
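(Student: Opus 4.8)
The plan is to dispatch the two numerical assertions immediately and then devote the argument to hamiltonian-connectedness. That $\Gamma_n$ has order $2n$ is clear, and the edge count follows by checking that the added chords $\{(3i,3i+n),\,(3i+1,3i+n+2),\,(3i+2,3i+n+1)\}$ form a perfect matching between the ``bottom'' half $\{0,\dots,n-1\}$ and the ``top'' half $\{n,\dots,2n-1\}$: as $i$ runs over one period the bottom endpoints exhaust $0,\dots,n-1$ and the top endpoints exhaust $n,\dots,2n-1$, each exactly once. Hence $|E(\Gamma_n)| = |E(C_{2n})| + n = 3n$ and $\Gamma_n$ is $3$-regular, so by Moon's theorem it is a minimal candidate.

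For the connectivity, I would first record two automorphisms: the rotation $\rho\colon v\mapsto v+3\pmod{2n}$ and the half-turn $\tau\colon v\mapsto v+n\pmod{2n}$. Both fix $E(C_{2n})$ and carry the set of chords onto itself, as one checks on a generic chord and on the three chords that wrap across the seam (there $\tau$ swaps the two crossed chords, and $\rho$ does likewise at the seam). Since $\gcd(3,2n)=3$, the $\rho$-orbit of a vertex is its residue class mod $3$, so up to applying $\rho$ we may assume one endpoint of the desired path lies in $\{0,1,2\}$, and $\tau$ lets us fold the top half against the bottom. This trims the pairs that must be joined, but crucially not to a bounded number, which is why an inductive idea is needed on top of symmetry.

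The inductive scaffolding is a decomposition into \emph{bricks} $B_j=\{3j,3j+1,3j+2\}\cup\{3j+n,3j+n+1,3j+n+2\}$: a bottom triple, its matched top triple, and the three matching edges between them. The structural fact, verified by relabeling, is that deleting the six vertices of one brick from $\Gamma_{n+3}$ and closing each of the two resulting gaps by a single cycle edge $e_{\mathrm{bot}},e_{\mathrm{top}}$ produces a graph isomorphic to $\Gamma_n$. Conversely a brick can be spliced back into either reconnection edge: for a generic brick with bottom $a,a+1,a+2$ and top $a+n,a+n+1,a+n+2$ (so $e_{\mathrm{bot}}=(a-1,a+3)$), replacing $e_{\mathrm{bot}}$ by the path
\[
(a-1),\ a,\ (a+n),\ (a+n+1),\ (a+n+2),\ (a+1),\ (a+2),\ (a+3)
\]
uses only edges of $\Gamma_{n+3}$ and visits all six brick vertices, and a $\tau$-symmetric path replaces $e_{\mathrm{top}}$. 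So, given $u,v$ in $\Gamma_{n+3}$, I would choose a brick $B$ whose closed neighborhood avoids $u$ and $v$ (possible once there are enough bricks, since $u,v$ meet only boundedly many), pass to $\Gamma_{n+3}\setminus B\cong\Gamma_n$, invoke the inductive hypothesis for a hamiltonian $u$--$v$ path there, and splice $B$ back in along $e_{\mathrm{bot}}$ or $e_{\mathrm{top}}$. The base cases (small $\Gamma_3,\Gamma_6,\Gamma_9$) are finite and checkable directly.

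The step I expect to be the main obstacle is that the splice only succeeds if the \emph{inner} path actually traverses one of the two reconnection edges, which plain hamiltonian-connectedness does not guarantee. The induction must therefore carry a strengthened hypothesis, roughly: every pair $u,v$ is joined by a hamiltonian path through any prescribed cycle edge not incident to $u$ or $v$. The delicate part is self-propagation — showing the spliced path still realizes an arbitrary prescribed edge of $\Gamma_{n+3}$, which forces one to track whether that edge lies in the old part (choose the inner path through its preimage) or is one of the brick edges created by the splice. Managing this edge-bookkeeping, together with the leftover residue-mod-$3$ casework and the boundary configurations where $u$ or $v$ sits near the deleted brick, is where the real effort lies. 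As an alternative that avoids the propagation issue, one can instead fix one endpoint in $\{0,1,2\}$ by symmetry, sort the second endpoint into finitely many types by its residue mod $3$ and its half, and write one explicit path template per type valid for all large $n$, verifying that each covers every vertex using only edges present for every $n$.
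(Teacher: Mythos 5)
Your setup (order, edge count, the rotation $\rho\colon v\mapsto v+3$, the brick decomposition, deletion of a brick giving $\Gamma_n$, and induction with finite base cases) matches the paper's strategy, but there is a genuine gap exactly where you flag it, and flagging it is not the same as closing it. Your splice requires the inner hamiltonian $u$--$v$ path in $\Gamma_{n+3}\setminus B\cong\Gamma_n$ to traverse one of the two reconnection edges $e_{\mathrm{bot}},e_{\mathrm{top}}$, which hamiltonian-connectedness does not guarantee; your remedy is a strengthened inductive hypothesis (a hamiltonian path through any prescribed cycle edge avoiding the endpoints), but you prove neither its base cases nor its propagation, and propagation is precisely the hard part: after splicing, one must realize prescribed edges that live inside the new brick or at its seams, and nothing in your sketch produces such paths. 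In a $3$-regular graph this prescribed-edge property is a substantially stronger statement than hamiltonian-connectedness, so the induction you describe is not a routine strengthening --- as written, the proof does not go through.

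The paper closes this gap by weakening, not strengthening, what is asked of the inner path. Instead of prescribing an edge, it takes an \emph{arbitrary} hamiltonian $u$--$v$ path $\gamma$ in $\Gamma_n$ (for $u,v$ in the ``orbital'' set $O$, far from the insertion site) and analyzes its restriction $\gamma_P$ to the six ``proximal'' vertices $P$ adjacent to where the new brick is inserted. Degree and endpoint constraints force $\gamma_P$ to be one of only $8$ configurations (six single paths, two pairs of paths), and for each configuration the paper exhibits a replacement path $\gamma_P'$ in $\Gamma_{n+3}$ through $P$ and all six new vertices; substituting $\gamma_P'$ for $\gamma_P$ yields the desired hamiltonian path. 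Rotational symmetry then reduces every pair $(u,v)$ to this orbital case, much as in your residue-mod-$3$ reduction. If you want to salvage your write-up, replace the prescribed-edge hypothesis with this finite local case analysis: classify how an arbitrary inner path can meet the neighborhood of the deleted brick, and show every pattern extends. Your alternative suggestion (explicit path templates per residue type) would also need all of that casework made explicit before it constitutes a proof.
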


One standard tool to demonstrate that a graph is hamiltonian-connected is to show that it has sufficiently many edges, or vertices of sufficiently high degree (c.f. \cite{Ore, Faudree, Wei, Kewen}). Since the graphs $\Gamma_n$ are sparse and contain no vertex of degree $>3$, none of these results apply. We will instead show that $\Gamma_n$ is hamiltonian-connected by considering several cases and inducting on $n$.

To emphasize the relationship between $\Gamma_n$ and $\Gamma_{n+3}$, we relabel the vertices of $\Gamma_{n+3}$ by $0,1, \dots, n-1, a, b,c, n, n+1, \dots, 2n-1, \alpha, \gamma, \beta $. By construction,
$\Gamma_{n+3}$ contains all the edges in 
$\Gamma_n$ except for $(n-1, n)$ and $(2n-1, 0)$, as well as edges $(a, \alpha), (b, \beta), (c, \gamma),$ $(n-1, a), (c, n), (2n-1, \alpha), (\beta,0).$ This is illustrated in Figure \ref{fig: inductive step}. Partition the vertex set of $\Gamma_{n+3}$ into \emph{new} vertices $N$, \emph{proximal} vertices $P$, and \emph{orbital} vertices $O$, where
    \begin{align*}
        N &= \{a, b, c, \alpha, \beta, \gamma\},\\
        P &= \{0, n-2, n-1, n, 2n-1, 2n-2\},\\
        O &= \{1, 2, \dots, n-3, n+1, n+2, \dots, 2n-3\}.
    \end{align*}

\begin{figure*}
    \centering
    \includegraphics[width=1\textwidth]{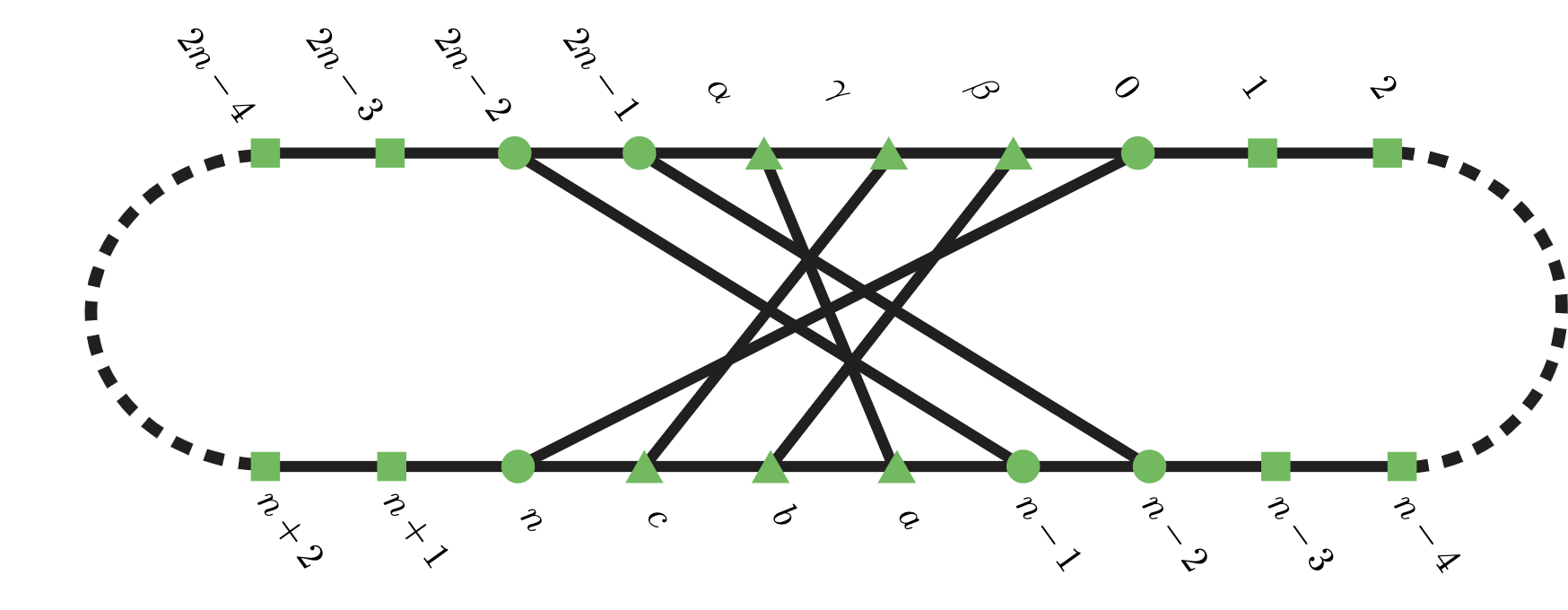}
    \caption{Part of the graph $\Gamma_{n+3}$ is shown here. It can be obtained from $\Gamma_n$ by inserting vertices $a, b, c, \alpha, \beta, \gamma$ and edges $(a, \alpha), (b, \beta), (c, \gamma)$, along with the edges needed to complete the cycle. Vertices indicated with squares are in $O$, with circles are in $P$, and with triangles are in $N$. }
    \label{fig: inductive step}
\end{figure*}

We first prove a special case. We will use rotational symmetry of $\Gamma_{n+3}$ and this special case to prove the other cases.

\begin{lemma}\label{case: old to old}
Suppose that $\Gamma_n$ is hamiltonian-connected, and $n\geq 12$. For any distinct vertices $u, v \in O$, there exists a hamiltonian path in $\Gamma_{n+3}$ from $u$ to $v$.
\end{lemma}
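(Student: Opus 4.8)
The plan is to lift a hamiltonian path of $\Gamma_n$, supplied by the inductive hypothesis, to a hamiltonian path of $\Gamma_{n+3}$ by rerouting it through the six new vertices $N$. The starting observation is that deleting $N$ from $\Gamma_{n+3}$ recovers $\Gamma_n$ with exactly the two cycle edges $(n-1,n)$ and $(0,2n-1)$ removed: every other old edge (cycle edge or chord) is common to both graphs, and the vertices of $N$ are attached to the old graph only through the four ``ports'' $n-1\!-\!a$, $c\!-\!n$, $2n-1\!-\!\alpha$ and $0\!-\!\beta$. So first I would invoke hamiltonian-connectedness of $\Gamma_n$ to get a hamiltonian path $\pi$ from $u$ to $v$, and then try to splice all of $N$ into $\pi$ at one of the two deleted edges.

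The crux is to show that $\pi$ must actually use one of the edges $(n-1,n)$ or $(0,2n-1)$; this is the only place the hypothesis $u,v\in O$ enters, and it is the step I expect to be the main obstacle to get right. Here I would use the chord structure at the boundary: from the defining pattern one checks that the relevant chords are $(n-1,2n-2)$ and $(n-2,2n-1)$ (this is where $n\geq 12$ is convenient, guaranteeing that the boundary labels and their chord-partners are distinct and lie in the generic range). Now suppose toward a contradiction that $\pi$ uses neither boundary edge. Since $u,v\in O$, none of the proximal vertices $n-1,\,n-2,\,2n-2,\,2n-1\in P$ is an endpoint of $\pi$, so each meets exactly two edges of $\pi$. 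Avoiding $(n-1,n)$ forces $\pi$ to use the two remaining edges at $n-1$, namely $(n-2,n-1)$ and $(n-1,2n-2)$; avoiding $(0,2n-1)$ forces the two remaining edges at $2n-1$, namely $(2n-2,2n-1)$ and $(2n-1,n-2)$. But these four forced edges are precisely the $4$-cycle $n-2,\,n-1,\,2n-2,\,2n-1,\,n-2$, and a path cannot contain a cycle---contradiction.

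It then remains to perform the splice. Assuming without loss of generality that $\pi$ uses $(n-1,n)$ (the case $(0,2n-1)$ is symmetric), I would replace that single edge by the detour $n-1,\,a,\,\alpha,\,\gamma,\,\beta,\,b,\,c,\,n$. One checks that each consecutive pair is an edge of $\Gamma_{n+3}$---alternating boundary cycle edges with the chords $(a,\alpha)$ and $(b,\beta)$---and that the detour visits every vertex of $N$ exactly once. Since all other edges of $\pi$ persist in $\Gamma_{n+3}$, the result is a hamiltonian path from $u$ to $v$, as required; in the symmetric case one replaces $(0,2n-1)$ by $2n-1,\,\alpha,\,a,\,b,\,c,\,\gamma,\,\beta,\,0$. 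The entire difficulty is concentrated in the forced-$4$-cycle argument of the previous paragraph: once a boundary edge is known to be traversed, the rerouting is immediate.
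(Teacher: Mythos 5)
Your structural reduction is correct (the only edges of $\Gamma_n$ missing from $\Gamma_{n+3}$ are $(n-1,n)$ and $(0,2n-1)$, all chords survive, and $N$ attaches to the old vertices only through the four ports you list), and your forced-$4$-cycle argument is also correct: if $\pi$ avoided both deleted edges, then the interior vertices $n-1$ and $2n-1$, each of degree $3$ in $\Gamma_n$, would force the four edges $(n-2,n-1)$, $(n-1,2n-2)$, $(2n-2,2n-1)$, $(2n-1,n-2)$ into $\pi$, and these form a $4$-cycle, which no path contains. The gap is in the splicing step: your dichotomy ``WLOG $\pi$ uses $(n-1,n)$, the case $(0,2n-1)$ is symmetric'' overlooks that $\pi$ may use \emph{both} deleted edges --- your crux only rules out that it uses \emph{neither}. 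In the both-edges case, replacing $(n-1,n)$ by the detour $n-1,a,\alpha,\gamma,\beta,b,c,n$ leaves the edge $(0,2n-1)$ in the resulting walk, and that edge does not exist in $\Gamma_{n+3}$ (the cycle there runs $2n-1,\alpha,\gamma,\beta,0$); so your assertion that ``all other edges of $\pi$ persist in $\Gamma_{n+3}$'' fails exactly here. This case genuinely occurs and cannot be wished away: for example the path $2n-3,\,2n-2,\,2n-1,\,0,\,1,\,\dots,\,n-1,\,n,\,\dots,\,2n-4$, which runs entirely along the cycle, is a hamiltonian path of $\Gamma_n$ between two vertices of $O$ that uses both $(n-1,n)$ and $(2n-1,0)$, and the inductive hypothesis hands you \emph{some} hamiltonian path with no control over which one.

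The repair is local: split into three cases rather than two. If $\pi$ uses both deleted edges, replace $(n-1,n)$ by the short detour $n-1,a,b,c,n$ and $(2n-1,0)$ by the short detour $2n-1,\alpha,\gamma,\beta,0$; both consist solely of cycle edges of $\Gamma_{n+3}$, they are vertex-disjoint, and together they cover all six vertices of $N$ exactly once, so the modified path is hamiltonian. If $\pi$ uses exactly one deleted edge, your long detours (which are indeed valid edge sequences of $\Gamma_{n+3}$ covering all of $N$) work as written. With that fix, your argument is correct and is genuinely different from the paper's: the paper restricts $\gamma$ to the induced subgraph on $P$, uses a similar degree-forcing analysis to show this restriction is one of eight explicit configurations, and exhibits a replacement path through $P\cup N$ for each; your route replaces that enumeration by the single forced-$4$-cycle contradiction plus a three-way splice, which is shorter and less reliant on figures.
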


\begin{proof}
 By the inductive hypothesis, there exists a hamiltonian path $\gamma$ in $\Gamma_n$ from $u$ to $v$, which necessarily contains all of the vertices of $P$. 
 
 Since the six vertices of $P$ are not endpoints of $\gamma$, and since there are no edges from $\{2n-1, n-1\}$ to any vertex of $O$, $\gamma$ must enter and leave $P$ at vertices in $\{0, n-2, n, 2n-2\}$. Let $\gamma_P$ be the subpath(s) of $\gamma$ restricted to the induced subgraph of $\Gamma_n$ on $P$. Then $\gamma_P$ has endpoints in $\{0, n-2, n, 2n-2\}$, so it is a single connected path or two connected paths.
 
 We first suppose $\gamma_P$ is a single connected path. There are 6 possible pairs of endpoints of $\gamma_P$. In each case, there are up to 2 possible paths connecting the endpoints. For example, consider the endpoints $0$ and $2n-2$ (see Figure \ref{fig: ex1path}). Since $n, n-2$ are not endpoints of $\gamma_P$, the edges $(n, n+1)$ and $(n-2, n-3)$ are not contained in $\gamma$. Then since $\gamma_P$ contains $n$ and $n$ is not an endpoint of $\gamma$, $\gamma_P$ must include edges $(0, n)$ and $(n, n-1).$ Similarly, $\gamma_P$ must contain edges $(2n-1, n-2)$ and $(n-1, n-2)$. Since $\gamma_P$ contains $(n, n-1)$ and $(n-1, n-2)$, it can not contain $(n-1, 2n-2)$. Similarly, it does not contain $(2n-1, 0).$ Since $\gamma_P$ is a path, it must contain the edge $(2n-2, 2n-1)$, and in particular it must be the highlighted path indicated in Figure \ref{fig: ex1path}.

 One can make a similar argument for the other endpoint pairs. This leaves 6 possible paths for $\gamma_P$. By a similar argument, when $\gamma_P$ contains two connected subpaths there are 2 cases. These are all illustrated in Figure \ref{fig: 8paths}.
 
 \begin{figure*} 
     \centering
     \includegraphics[width=.5\textwidth]{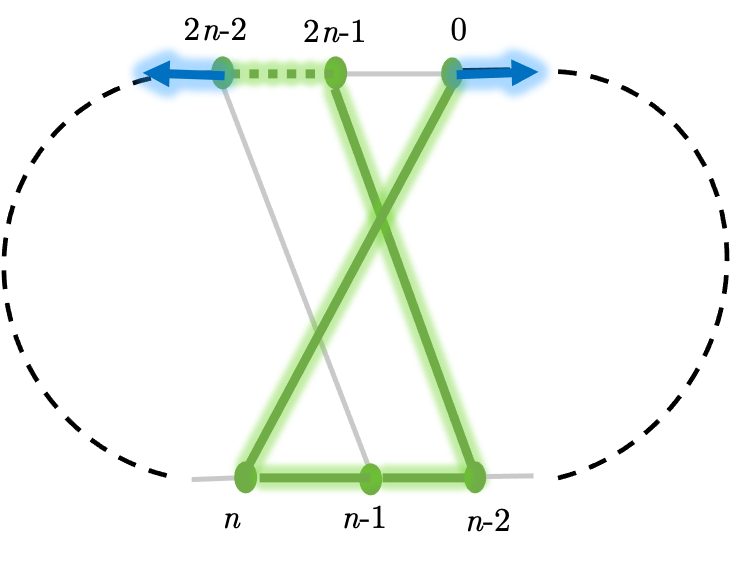}
     \caption{Example case of $0$ and $2n-2$ as endpoints of $\gamma_P$. The only possible path is shown, as described in Lemma \ref{case: old to old}}
     \label{fig: ex1path}
 \end{figure*}

 Each of the possibilities for $\gamma_P$ has a corresponding path $\gamma_P'$ in $\Gamma_{n+3}$, as shown in Figure \ref{fig: 8paths}, which meets every vertex in $N$. We can obtain a hamiltonian path in $\Gamma_{n+3}$ by substituting $\gamma_P$ in $\gamma$ with $\gamma_P'$.
 \end{proof}
 
 \begin{figure*}
     \centering
     \includegraphics[width=.6\textwidth]{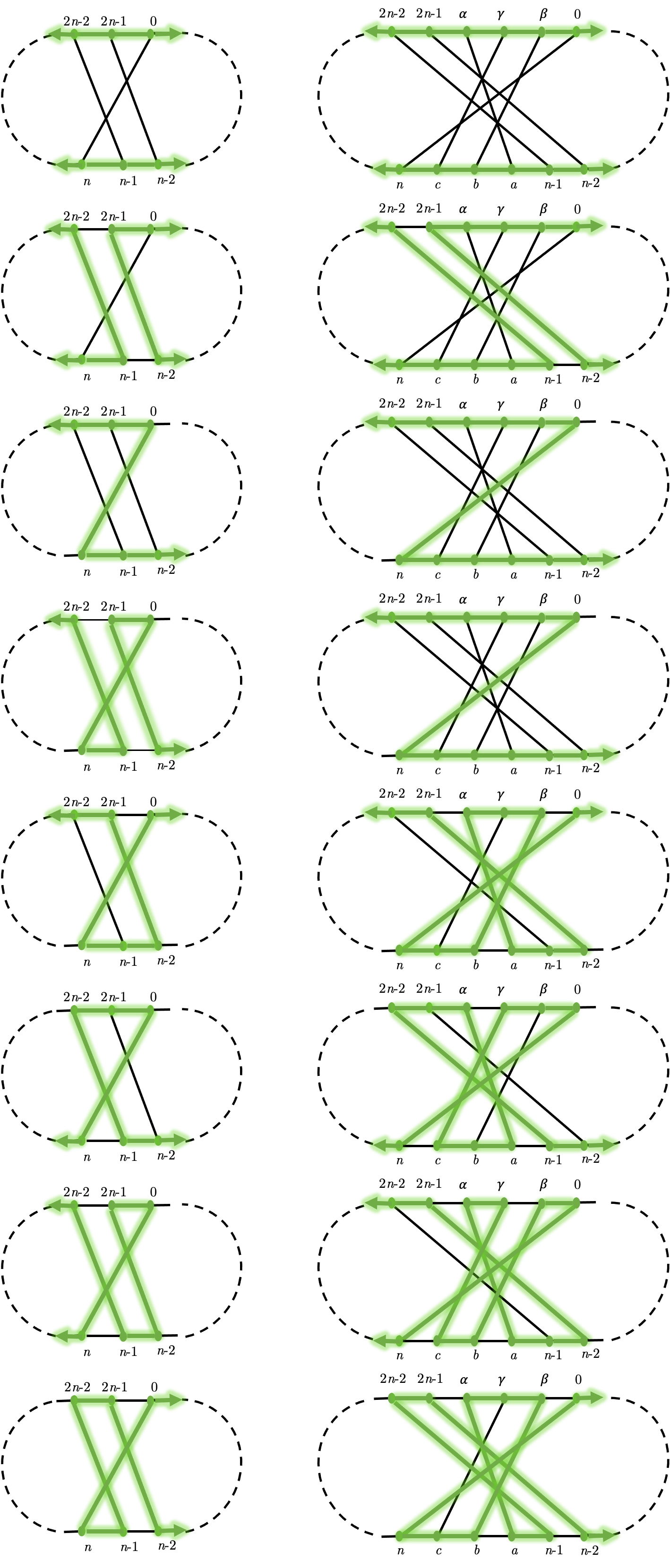}
     \caption{The 8 possibilities of $\gamma_P$ on the left and the corresponding paths $\gamma_P'$ in $\Gamma_{n+3}$ on the right, as described in Lemma \ref{case: old to old}.}
     \label{fig: 8paths}
 \end{figure*}

\begin{proof}[Proof of Theorem \ref{thm: minimal hamiltonian-connected}]
It can be verified (e.g.\ by computer program) that $\Gamma_k$ is hamiltonian-connected for $k = 3, 6, 9, 12.$ Suppose that $\Gamma_n$ is hamiltonian-connected. Let $u, v$ be a pair of distinct vertices in $\Gamma_{n+3}$. We will prove that there is a hamiltonian-path connecting them by considering several cases, depending on the vertex sets that $u, v$ belong to.

\begin{enumerate}
    \item[Case 1:]  $u, v \in O$. Apply Lemma \ref{case: old to old}.
    \item[Case 2:] $u, v \in N \cup P$. Rotate $\Gamma_{n+3}$ clockwise along the cycle $C_{2n+6}$ by 6 vertices. Note that this is a automorphism of $\Gamma_{n+3}$. Then both $u$ and $v$ are now in $O$, so there is a hamiltonian path $\gamma$ from $u$ to $v$ by Case 1.
    \item[Case 3:] $u\in O, v \in N$. If $v \in \{\alpha, a\}$, rotate $\Gamma_{n+3}$ counterclockwise by 3 vertices. Then $v \in O$, and either $u \in O$ or $u \in \{n, c, b, 0, \beta, \gamma\}$. If $u \in O$, then by Case 1 there is a hamiltonian path $v$ to $u$. Otherwise, rotate a further 6 vertices counterclockwise. Then $u \in O$ and $v \in O$, so by Case 1 we can find a $(u, v)$ hamiltonian path.
    \item[Case 4:] $u \in O, v \in P$. Rotate $\Gamma_{n+3}$ so that $v$ is in $N$. By the Cases 2 and 3 there is a $(u, v)$ hamiltonian path, regardless of the set containing $u$. 
\end{enumerate}
\end{proof}

While this theorem is restricted to graphs of order $2n$ where $n\equiv 0\mod 3$, a small adaptation to the construction produces graphs of arbitrary even order. In particular, to produce the graph of order $2(n-1)$, first produce the graph of order $2n$, then delete the edge $(\beta, b)$ and merge the two edges that share degree-2 vertices. To produce the graph of order $2(n-2)$ delete the edge $(\gamma, c)$ and merge the two edges that share degree-2 vertices. We believe that these are all hamiltonian-connected, and that a similar case-analysis approach will provide a proof. We have omitted this for the sake of space. We have verified hamiltonicity of these graphs for $n\leq 84$ by adapting the program \texttt{hamiltonicityChecker} (see \cite{jr_program, jr_paper}), which has methods for determining the existence of a hamiltonian path between two specified vertices.

\printbibliography

\end{document}